\let\oldlabel=\label
\def\prellabel{\marginparsep=1em
    \def\label##1{\oldlabel{##1}\ifmmode\else\ifinner\else
         \marginpar{{\footnotesize\ \\ \tt
                    ##1}}\fi\fi}}
\let\epsilon\varepsilon
\let\phi=\varphi
\let\theta=\vartheta
\let\Bbb=\mathbb
\def\opn#1#2{\def#1{\operatorname{#2}}}
\opn\gr{gr}
\opn\gp{gp}
\opn\sn{sn}
\opn\n{n}
\opn\k{k}
\opn\inte{int}
\opn\Proj{Proj}
\opn\Supp{Supp}
\opn\chara{char}
\opn\rank{rank}
\opn\Spec{Spec}
\opn\U{U}
\opn\I{Im}
\opn\W{W}
\def\cc{{\mathfrak c}}
\def\ZZ{{\Bbb Z}}
\def\RR{{\Bbb R}}
\def\CC{{\Bbb C}}
\def\NN{{\Bbb N}}
\def\QQ{{\Bbb Q}}
\def\FF{{\bf F}}
\def\TT{{\Bbb T}}
\def\PP{{\Bbb P}}
\def\AA{{\Bbb A}}
\def\cF{{\mathcal F}}
\def\cV{{\mathcal V}}
\opn\End{End}
\opn\ch{ch}
\opn\CH{CH}
\opn\op{op}
\opn\Pic{Pic}
\opn\Vect{Vect}
\opn\Im{Im}
\newtheorem{lemma}{Lemma}[section]
\newtheorem{corollary}[lemma]{Corollary}
\newtheorem{theorem}[lemma]{Theorem}
\theoremstyle{definition}
\newtheorem{remark}[lemma]{Remark}
\newtheorem{conjecture}[lemma]{Conjecture}
\begin{document}

\title[$K$-theory of toric varieties revisited]{$K$-theory of toric varieties revisited}

\author{Joseph Gubeladze}

\thanks{Supported by NSF grant DMS-1000641}

\subjclass[2010]{Primary 19D35; Secondary 14C35, 14M25}

\address{Department of Mathematics, San Francisco
State University, San Francisco, CA 94132, USA}

\email{soso@sfsu.edu}

\begin{abstract}
After surveying higher $K$-theory of toric varieties, we present Totaro's old (c. 1997) unpublished result on expressing the corresponding homotopy theory via singular cohomology. It is a higher analog of the rational Chern character isomorphism for general toric schemes. In the special case of a projective simplicial toric scheme over a regular ring one obtains a rational isomorphism between the homotopy $K$-theory and the direct sum of $m$ copies of the $K$-theory of the ground ring, $m$ being the number of maximal cones in the underlying fan. Apart from its independent interest, in retrospect, Totaro's observations motivated some (old) and complement several other (very recent) results.  We conclude with a conjecture on the nil-groups of affine monoid rings, extending the nilpotence property. The conjecture holds true for $K_0$.
\end{abstract}

\maketitle

\section{$K$-theory of toric varieties: survey}\label{FundNil}

\subsection{Conventions}


All our rings and monoids are commutative. Unless specified otherwise, the monoid operation is written additively.

Our monoid and convex geometry terminology follows \cite{kripo}. In particular, an \emph{affine monoid} is a finitely generated submonoid of a free abelian group. For an affine monoid $M$ its largest subgroup will be denoted by $\U(M)$. An affine monoid $M$ is called (i) \emph{positive} if $\U(M)=0$, (ii) \emph{normal} if $M$ is isomorphic to a monoid of the form $C\cap\ZZ^d$ for a \emph{finite rational cone} $C\subset\RR^d$, and (iii) \emph{seminormal} if, for every element $x\in\gp(M)$, the inclusions $2x,3x\in M$ imply $x\in M$, where $\gp(M)$ is the group of differences of $M$, i.e., $\gp(M)$ is the universal group to which $M$ maps. We say that $M$ is \emph{simplicial} if the cone $\RR_+M\subset\RR\otimes\gp(M)\cong\RR^{\rank\gp(M)}$ is such.

For a functor, defined  on rings, a natural number $c$, and a ring $R$ we denote by $c_*$ the endomorphism, induced by the monoid ring endomorphism  $R[M]\to R[M]$, $m\mapsto m^c$, where the monoid operation is written multiplicatively.

For generalities on toric varieties the reader is referred to \cite[Chap. 10]{kripo} and \cite{CLStoric}.

All fans considered below are assumed to be finite and rational.

Let $\cF$ be a fan. One calls $\cF$ \emph{simplicial} if the cones in $\cF$ are such. The set of maximal cones in $\cF$ is denoted by $\max(\cF)$. The \emph{toric scheme} over a ring $R$, associated with $\cF$, will be denoted by $\cV_R(\cF)$.

For a fan $\cF$ in $\RR^d$, the scheme $\cV_R(\cF)$ is (i) \emph{complete} if and only if $\cF$ is complete, i.e., $\bigcup_{\max(\cF)}\sigma=\RR^d$, (ii) \emph{projective} if and only if $\cF$ is projective, i.e., there is a full dimensional polytope $P$ in the dual space $(\RR^d)^{\text{op}}$ such that the set of duals of the corner cones of $P$ is exactly $\max(\cF)$, and (iii) \emph{quasi-projective} if and only if $\cF$ is a subfan of a projective fan. For a projective fan $\cF$ the mentioned polytope $P$ can be chosen to have vertices in the dual lattice $(\ZZ^d)^{\text{op}}$ and then one writes $\cV_R(\cF)=\Proj(R[P])$.

We have $\AA^d_R=\Spec(R[\ZZ_+^d])$, $\TT^d_R=\Spec(R[\ZZ^d])$, and $\PP^d_R=\Proj(R[\ZZ_+^{d+1}])=\Proj(R[\Delta_d])=\Proj(R[c\Delta_d])$, where $\Delta_d$ is the unit $d$-simplex, $c$ is any natural number, and $c\Delta_n$ is the $c$-th dilation of $\Delta_d$.

For a cone $\sigma\subset\RR^d$, we denote by $M(\sigma)$ the intersection of the maximal linear subspace of the dual cone $\sigma^{\op}$ with the dual lattice $(\ZZ^d)^{\op}$.

For a sequence of natural numbers $\cc=(c_1,c_2,\ldots)$ and a monoid $M$, we put $M^{\cc}=\lim_{\to}\xymatrix{(M\ar[r]^{c_1\cdot}&M\ar[r]^{c_2\cdot}&M\ar[r]^{c_3\cdot}&\cdots)}$. Thus, for a constant sequence $\cc=(c,c,\ldots)$ we have $\ZZ^{\cc}\cong\ZZ[1/c]$ and, more generally, $G^{\cc}\cong G\otimes \ZZ[1/c]$, where $G$ is any abelian group.

If $X$ is an abelian group, a homomorphism of abelian groups, or a spectral sequence, we will use $X_\QQ$ for $X\otimes\QQ$.

\subsection{Affine monoid rings.} Let $R$ be a regular ring. Prototypes for the results discussed below are the classical isomorphisms of Grothendieck $K_0(R)\cong K_0(\AA_R^{d})\cong K_0(\TT_R^{d})$ and Berthelot $K_0(\PP^d_R)\cong K_0(R)^{d+1}$. Both results were extended to all higher groups by Quillen \cite{Qhk}. From this point on all natural isomorphisms will be written and referred to as equalities.

The following isomorphism of graded rings results from the iterative use of the \emph{Fundamental Theorem of $K$-theory}:
\begin{equation}\label{fundamental}
K_*(\TT^d_R)=K_*(R)\otimes\Lambda^*(\ZZ^d),
\end{equation}
where $\Lambda^1(\ZZ^d)=\ZZ^d\subset\U(R[\ZZ^d])\subset K_1(R[\ZZ^d])$.

For general (even normal) affine monoid rings over regular ring, the equality $K_*(R)=K_*(R[M])$ fails badly (\cite{Gnontrivial}) and the following \emph{nilpotence} result serves as a substitute. Let $\cc=(c_1,c_2,\ldots)$ be a sequence of natural numbers with $c_i\ge2$ for all $i$, $M$ an affine positive monoid, and $R$ a regular ring. Then
\begin{equation}\label{c-nilpotence}
K_*(R[M^{\cc}])= K_*(R)\ \text{if either}\ R\ \text{contains a field or}\ M\ \text{is simplicial}.
\end{equation}
The equality (\ref{c-nilpotence}) in the special case when $\QQ\subset R$ is proved in \cite{Gnil,Gglobal}. Using their $cdh$ techniques, Corti\~nas, Haesemeyer, Walker, and Weibel later gave a shorter proof when $R$ is a field of characteristic $0$ \cite{CHWWtoric}. Both approaches use Corti\~nas' verification of the \emph{KABI} conjecture. In the recent work \cite{CHWWpos}, the same authors settle the general case when $R$ contains a field. When $M$ is simplicial and $R$ is an arbitrary regular ring the equality is proved in \cite[Thm. 6.4]{Ghktt}.

For a constant sequence $\cc=(c,c\ldots)$ with $c\ge2$, an affine monoid $M$, and a regular ring $R$, such that either $M$ is simplicial or $R$ contains a field, (\ref{fundamental}) and (\ref{c-nilpotence}) imply that
$$
K_*(R[M^{\cc}])= K_*(R[\U(M)^{\cc}])=K_*(R)\otimes\Lambda^*(\ZZ[1/c]^d),\quad d=\rank\U(M).
$$

\medskip For $K_1$ and $K_2$, the equality (\ref{c-nilpotence}) can be extended to all regular coefficient rings and all affine monoids. Moreover, the result can be strengthened to the corresponding unstable $K$-groups \cite{GK1,GK2}.

For the non-positive $K$-theory of monoid rings the following stronger equalities hold true. Let $R$ be a regular ring and $M$ an affine monoid. Then:

\begin{equation}\label{K0}
\begin{aligned}
&K_0(R)=K_0(R[M])\ \ \text{for}\ M\ \text{seminormal}\ \ \text{(\cite[Thm. 8.37(a)]{kripo})},\\
&SK_0(R)=SK_0(R[M]),\ \ \text{(\cite[Thm. 8.37(b)]{kripo})},\\
&K_n(R[M])=0,\quad n<0.
\end{aligned}
\end{equation}

\noindent The triviality of the negative groups in the special case of normal monoids directly follows from the first equality in (\ref{K0}) and the fact that $K_n(R[M])$ is a direct summand of $K_0(R[M\times\ZZ^{-n}])$ for $n<0$, a consequence of the Fundamental Theorem. The triviality in the general case of affine monoids is a little more intricate, but the argument is exactly parallel to the proof of \cite[Thm. 8.37(b)]{kripo}.

In the special case when the Krull dimension of $R$ is 1, the stronger unstable versions of the first two equalities in (\ref{K0}) are shown in \cite{Guan,Swg}: if $M$ is seminormal then all projective $R[M]$-modules are extended from the Dedekind ring $R$, while for any affine monoid $M$ all projective $R[M]$-modules are of the form  \emph{free $\oplus$ rank 1}.

\subsection{Toric varieties.} First we assume $\cV=\cV_k(\cF)$ is a complete and smooth toric scheme over a ground field $k$ (equivalently, $\cF$ is complete and the cones in $\cF$ are unimodular). By the Grothendieck-Riemann-Roch theorem, the Chern character yields the ring homomorphism $\ch:K_0(\cV)_\QQ\to\CH^*(\cV)_\QQ$. Jurkiewicz and Danilov determined the target Chow ring long ago.\footnote{For the unreferenced results in this subsection see \cite[Chap. 10]{kripo} and the references therein.} As an additive group, $\CH^*(\cV)=\ZZ^n$, $n=\#\max(\cF)$. It is also known that $K_0(\cV)$ is a free abelian group (Merkurjev-Panin, Morelli, Vezzosi-Vistoli). As for higher groups, Vezzosi-Vistoli \cite{VVhak} have shown the ring isomorphism
$K_*(k)\otimes K_0(\cV)=K_*(\cV)$. So, additively,
\begin{equation}\label{VVhaK}
K_*(\cV)=K_*(k)^m,\quad m=\#\max(\cF).
\end{equation}

For a general simplicial projective toric variety $\cV$ over a large ground field (such as $\CC$), $K_0(\cV)$ may contain a nonzero continuous contribution from the higher nil-groups \cite{Ghuge} (see also \cite{CHWWtoric}). In Section \ref{Conjecture} we offer a conjectural glimpse into the structure of such a continuous parts.

The nilpotence property (\ref{c-nilpotence}) extends to general toric varieties as follows. For $\cc=(c_1,c_2,\ldots)$, $c_i\ge2$, a fan $\cF$, and a regular ring $R$, such that  either $R$ contains a field or $\cF$ is simplicial, one has $K_*(\cV)^{\cc}=K_*(\cV\times\AA^1)^{\cc}$ (\cite[Prop. 4.7]{Ghktt}). Equivalently, $K_*(\cV)^{\cc}=KH_*(\cV)^{\cc}$ (\cite[Thm. 6.9]{CHWWtoric}).

\begin{remark}\label{remark}
(a) An affine positive monoid $M$ admits a \emph{grading}, i.e., a partition $M=\{0\}\cup M_1\cup M_2\cup\cdots$ with $M_i+M_j\subset M_{i+j}$ (\cite[Prop. 2.17]{kripo}). Consequently, for a field $k$ of characteristic 0, Stienstra's operations \cite{Sop} of the big Witt vectors $\text{W}(k)$ on the nil-groups, extended to the graded case by Weibel \cite{Woperations}, make $K_*(k[M])/K_*(k)$ into a $k$-vector space in such a way that the endomorphism
$$
c_*:K_*(k[M])/K_*(k)\to K_*(k[M])/K_*(k)
$$
has no non-zero eigenvalues for $c\ge2$. Totaro uses this observation to conclude that the nil part of the spectral sequence for the $K$-theory of a toric variety, associated with the standard open cover, does not interfere with the homotopy $K$-theory part (see Lemma \ref{Esplits} below). The equalities $(\ref{c-nilpotence})$ make the analysis of the spectral sequence straightforward. Back in the 1990s, Totaro's observation suggested the crucial idea of using the $\text{W}(k)$-action to prove the equalities (\ref{c-nilpotence}) for characteristic $0$ fields -- a project, stalled at that time on the level of classical $K$-groups, and later completed in \cite{Gnil}. The full potential of the Stienstra operations in the study of $K$-theory of monoid rings does not seem to be exhausted, though; see Section \ref{Conjecture}.

\medskip(b) For any projective simplicial toric scheme, Corollary \ref{totaroequalities}(c) below gives an analog of (\ref{VVhaK}) for the Weibel homotopy theory: $KH_*(\cV_R(\cF))_{\QQ}=K_*(R)_{\QQ}^m$, where $R$ is a regular ring, $\cF$ is a projective simplicial fan, and $m=\#\max(\cF)$. For a complete simplicial fan $\cF$ and a field $k$, partial results, suggesting that $KH_*(\cV_k(\cF))_{\QQ}$ depends only on the combinatorial type of $\cF$, are obtained in Massey's recent preprint \cite{Massey}.

H\"uttemann  \cite{Htoric} recently showed that $K_*(\Proj(R[P])=K_*(R)^{n_P+1}\oplus\ ?$ for any lattice polytope $P$ and any ring $R$, where $n_P$ is the least non-negative integer for which the dilated polytope $(n_P+1)P$ has an interior lattice point. Obviously, $n_P\le\dim P$. It is known that $n_P$ is the number of distinct integral roots of the \emph{Ehrhart polynomial} of $P$. Corollary \ref{totaroequalities} suggests that, for a general lattice polytope $P$, the number of copies of $K_*(R)$, splitting off from $K_*(\Proj(R[P])$ rationally, is at least $\#\text{vert}(P)$.
\end{remark}

\section{Spectral sequences, weights, and degeneration}\label{Spectral}

This section is based on Totaro's notes. We also include background material and work over general regular ground rings, instead of Totaro's use of characteristic $0$ fields.

\subsection{Mayer-Vietoris spectral sequence}\label{Cohomological}

Thomason \cite[Section 8]{Tderived} established $K$-theoretical Mayer-Vietoris long exact sequence for covers by two open subschemes. For covers by more than two open subschemes one obtains strongly convergent spectral sequences \cite[Section 8]{Tderived}. The theory, considered in \cite{Tderived}, is the Waldhausen theory, associated with the corresponding category of perfect complexes. The latter coincides with Quillen's $K$-theory for schemes with ample systems of line bundles. This class of schemes includes quasi-projective schemes over affine schemes \cite[Section 3]{Tderived}. The analogous spectral sequence for $KH_*$ had been constructed before in \cite{Wkhomotopic}. In particular, for a finite open cover $\cV=\bigcup_{i=1}^nU_i$, one has
$$
E_1^{p,q}=\bigoplus_{i_0<\cdots<i_{p}}\k_q(U_{i_0}\cap\cdots\cap U_{i_{p}})\Longrightarrow\k_{q-p}(\cV),
$$
where $\k_*$ denotes $K_*$ or $KH_*$ and the indexing is that of Bousfield-Kan (as in \cite{Tderived}):
$$d_r:E_r^{p,q}\to E_r^{p+r,q+r-1}
$$
The indexing used in \cite{Wkhomotopic} is the traditional one, relating to the one above via $q\leftrightarrow-q$.

\medskip\noindent\emph{Convention}. For the rest of Section \ref{Spectral} we let $\cF$ be a quasi-projetive fan in $\RR^d$, $\max(\cF)=\{\sigma_1,\ldots,\sigma_n\}$, and $R$ be a regular ring. Also, we denote by ${}'E$ and ${}''E$, respectively, the $KH_*$- and $K_*$-spectral sequences for the standard open cover of $\cV_R(\cF)$ by affine toric schemes.

\medskip The affine open subscheme $U_{\sigma_i}$, corresponding to $\sigma_i$, is of the form
$$
\Spec(R[\sigma^{\op}_i\cap(\ZZ^d)^{\op}]=\Spec(R[M(\sigma_i)\times M_i])
$$
for some affine normal positive monoid $M_i$. Because $M_i$ admits a grading, we have $KH_*(U_{\sigma_i})=KH_*(R[M(\sigma_i)])=K_*(R[M(\sigma_i)])$. So, in view of the third equality in (\ref{K0}), both ${}'E$ and ${}''E$ are first quadrant spectral sequences.  By (\ref{fundamental}), the first page of ${}'E$ is
\begin{equation}\label{toric Mayer-Vietoris}
\tiny{\xymatrix{
\cdots&&&\\
\bigoplus_{i_0}\bigoplus_{j=0}^2(\Lambda^{2-j}M(\sigma_{i_0})\otimes K_j(R))\ar[r]&\bigoplus_{i_0<i_1}\bigoplus_{j=0}^2(\Lambda^{2-j}M(\sigma_{i_0}\cap\sigma_{i_1})\otimes K_j(R))\ar[r]&\cdots\\
\bigoplus_{i_0}(M(\sigma_{i_0})\bigoplus K_1(R))\ar[r]&\bigoplus_{i_0<i_1}(M(\sigma_{i_0}\cap\sigma_{i_1})\bigoplus K_1(R))\ar[r]&\cdots\\
\oplus_{i_0}K_0(R)\ar[r]&\oplus_{i_0<i_1}K_0(R)\ar[r]&\cdots\\
}}
\end{equation}
That is,
\begin{equation}\label{Epq}
{'E}_1^{p,q}=\bigoplus_{i_0<\cdots<i_{p}}\bigoplus_{j=0}^q\left(\Lambda^{q-j}M(\sigma_{i_0}\cap\cdots\cap\sigma_{i_{p}})\otimes K_j(R)\right).
\end{equation}
We also have
$$
{''E}_1^{p,q}=\left[\bigoplus_{i_0<\cdots<i_{p}}\bigoplus_{j=0}^q\left(\Lambda^{q-j}M(\sigma_{i_0}\cap\cdots\cap\sigma_{i_{p}})\otimes K_j(R)\right)\right]\oplus N^{p,q}
$$
for certain nil-groups $N^{p,q}$.

For any natural number $c\ge2$:
\begin{enumerate}
\item[(i)]
$c_*$ acts on ${}'E$ as well as on ${}''E$,
\item[(ii)]
$c_*$ is the identity on $K_*(R)$,
\item [(iii)]
$c_*$ acts on $\Lambda^aM(\sigma_{i_0}\cap\cdots\cap\sigma_{i_{p}})$ and, therefore, on $\Lambda^aM(\sigma_{i_0}\cap\cdots\cap\sigma_{i_{p}})\otimes K_j(R)$ by multiplication on $c^a$.
\end{enumerate}

The weights, resulting from the action of $c_*$ on ${}'E$ will be referred to as \emph{$\NN$-weights} (because $c\mapsto c_*$ gives rise to an action of the multiplicative monoid $\NN$). Thus the $\NN$-weight of $\Lambda^aM(\sigma_{i_0}\cap\cdots\cap\sigma_{i_{p}})\otimes K_j(R)$ is $c^a$.

The first step in Totaro's analysis of the spectral sequence ${}'E$ is the following

\begin{lemma}\label{Esplits}
Assume either $R$ contains a field or $\cF$ is a simplicial fan. Then ${}'E$ and the $N^{p,q}$ form two spectral subsequences of ${}''E$ and, denoting by $N$ the latter spectral sequence, we have ${}''E={}'E\oplus N$.
\end{lemma}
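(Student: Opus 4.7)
First I would exhibit the $E_1$-page decomposition explicitly from the toric structure. Each intersection $\tau = \sigma_{i_0}\cap\cdots\cap\sigma_{i_p}$ is itself a cone of $\cF$, giving an affine open $U_\tau = \Spec R[M(\tau)\times M_\tau]$ for a positive graded affine normal monoid $M_\tau$. The graded augmentation $R[M_\tau]\twoheadrightarrow R$ (killing $M_\tau\setminus\{0\}$) combines with the identity on $R[M(\tau)]$ to give a retraction $R[M(\tau)\times M_\tau]\twoheadrightarrow R[M(\tau)]$ of the natural inclusion, and this canonically splits $K_*(U_\tau)$ as $KH_*(U_\tau)\oplus (\text{nil-part})$. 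These splittings are natural under face inclusions, so passing to the Mayer--Vietoris double complex gives the $E_1$-level decomposition ${}''E_1 = {}'E_1 \oplus N$ as a decomposition of $d_1$-complexes.

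The main task is to propagate this decomposition through every $d_r$, and here I use the endomorphism $c_*$ and observations (i)--(iii). Since $c_*$ commutes with every $d_r$, each generalized $c_*$-eigenspace of ${}''E$ is a $d_r$-subcomplex. By (iii), on ${}'E_1^{p,q}$ the operator $c_*$ has the $q+1$ distinct positive integer eigenvalues $c^0,c^1,\ldots,c^q$, one per exterior-algebra degree. The crucial claim is that none of these occurs as a $c_*$-eigenvalue on $N^{p,q}$, in either of the two hypotheses of the lemma. When $R$ contains a field, this is the content of Remark~\ref{remark}(a): the Stienstra--Weibel $W(k)$-module structure on the nil part makes $c_*$ act without nonzero eigenvalues. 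When $\cF$ is simplicial, the nilpotence equality (\ref{c-nilpotence}) applied at each affine intersection gives $\lim_{c_*} N^{p,q} = 0$, so $c_*$ is locally nilpotent on $N^{p,q}$: a hypothetical $c^a$-eigenvector $x$ with $a\geq 0$ would satisfy $c_*^m x = c^{am}x$ for all $m$ and simultaneously $c_*^m x \to 0$, forcing $x=0$.

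Combining these facts, the generalized $c_*$-eigenspaces of ${}''E^{p,q}_1$ with nonzero eigenvalues assemble into ${}'E_1^{p,q}$, while the locally nilpotent part is exactly $N^{p,q}$. Both characterizations are intrinsic to the $c_*$-action, and $c_*$ commutes with every $d_r$, so the decomposition persists to all later pages, yielding the spectral-subsequence splitting ${}''E = {}'E \oplus N$. The step I expect to be the main obstacle is making the ``no nonzero eigenvalue on $N$'' claim sharp enough to carry integrally rather than merely rationally: the Stienstra--Weibel argument is inherently characteristic-zero and must be upgraded via the CHWWpos extension cited in the survey to cover all field-containing regular $R$, while in the simplicial case one must control possible $c$-power torsion in the nil-groups before the eigenvalue separation can be invoked to split the spectral sequence on the nose.
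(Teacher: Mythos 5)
Your core mechanism is the same as the paper's: separate the two candidate summands of ${}''E$ by the $c_*$-action, using that $c_*$ multiplies $\Lambda^aM(\sigma_{i_0}\cap\cdots\cap\sigma_{i_p})\otimes K_j(R)$ by $c^a$ while (\ref{c-nilpotence}) makes $c_*$ locally nilpotent on each $N^{p,q}$. (That local nilpotence already covers both hypotheses uniformly, so your case split --- Stienstra--Weibel eigenvalues when $R$ contains a field, nilpotence when $\cF$ is simplicial --- is unnecessary; the paper uses only the nilpotence.) However, the step you yourself flag as ``the main obstacle'' is a genuine gap, and it is precisely the content of the paper's proof. If $f$ is a $c_*$-equivariant map from a group on which $c_*$ acts as $c^a$ with $a\ge1$ into a group on which $c_*$ is locally nilpotent, you only obtain $c^{al}f(x)=0$ for some $l$, i.e.\ the image is $c$-power torsion, not zero; your ``forcing $x=0$'' fails over $\ZZ$. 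For the same reason the ``intrinsic generalized eigenspace'' characterization of the two summands is unavailable for abelian groups (a $c$-power-torsion element of $N^{p,q}$ can satisfy $c_*x=c^ax$), so you cannot conclude that every operator commuting with $c_*$ preserves the decomposition. As written, your argument proves the lemma only after tensoring with $\QQ$, whereas the statement is integral.

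The missing idea is a two-prime trick, which closes the gap with no control of torsion in the nil-groups. First tensor with $\ZZ[1/c]$: the paper's sublemma shows that between $\ZZ[1/c]$-modules, any $c_*$-equivariant homomorphism in either direction between a module on which $c_*$ acts as $c^k$ and one on which $c_*$ is locally nilpotent is zero. Applied to subquotients (so that all pages $E_r$ are covered at once), this shows every cross-component of every differential $d_r$ between the $\Lambda^*M\otimes K_*(R)$ part and the $N^{p,q}$ part dies after inverting $c$. Repeating with a second integer $c'\ge2$ coprime to $c$ shows the image of such a component is simultaneously $c$-power and $c'$-power torsion, hence zero integrally. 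This verifies directly that the $E_1$-splitting --- which, as you correctly set up, comes from the compatible augmentation retractions --- is respected by all differentials, which is all that ``spectral subsequence'' requires.
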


\begin{proof}
In view of (\ref{c-nilpotence}) and the sublemma below, the properties (i-iii) imply that any homomorphism between subquotients of the groups in (\ref{toric Mayer-Vietoris}), viewed as groups in the first page of ${}''E$, and subquotients of the $N^{p,q}$ becomes zero homomorphism after tensoring with $\ZZ[1/c]$. Since this holds true also for $c'\ge2$, coprime with $c$, there are no non-zero homomorphisms in ${}''E$ of the mentioned type.
\end{proof}

\medskip\noindent{\bf Sublemma.}
\emph{Let $A$ and $B$ be $\ZZ[1/c]$-modules. Assume $\alpha:A\to A$ is multiplication by $c^k$ for some $k\in\NN$ and $\beta:B\to B$ is an endomorphism, such that for any $x\in B$ there exists $l\in\NN$ with $\beta^l(x)=0$. Then in any commutative squares of $\ZZ[1/c]$-modules
$$
\xymatrix{A\ar[r]^f\ar[d]_{\alpha}&B\ar[d]^{\beta}\\
A\ar[r]_f&B
}
\quad \text{and}\quad \xymatrix{B\ar[r]^g\ar[d]_{\beta}&A\ar[d]^{\alpha}\\
B\ar[r]_g&A
}
$$
one has $f=0$ and $g=0$.}

\begin{proof}
Let $x\in A$ and $y\in B$. Assume $\beta^l(f(x))=0$ and $\beta^n(y)=0$ for some $l,n\in\NN$. Then we have $0=(\beta^lf)(x)=(f\alpha^l)(x)=c^{kl}f(x)$ and $0=(g\beta^n)(x)=(\alpha^ng)(y)=c^{kn}g(y)$, forcing $f(x)=0$ and $g(y)=0$.
\end{proof}

Lemma \ref{Esplits} implies that the map $K_*(\cV_R(\cF))\to KH_*(\cV_R(\cF))$ is surjective. Actually, by \cite[Prop. 5.6]{CHWWtoric}, the map $K_*(\cV_R(\cF))\to KH_*(\cV_R(\cF))$ is always split surjective:
\begin{equation}\label{splitsurijective}
K_*(\cV_R(\cF))=KH_*(\cV_R(\cF))\oplus N_*.
\end{equation}
In fact, the corresponding splittings on the open subschemes
$$
U_{\sigma_{i_0}\cap\cdots\cap\sigma_{i_{p}}}=U_{\sigma_{i_0}}\cap\cdots\cap U_{\sigma_{i_{p}}}
$$
are compatible. So the splitting extends to the whole scheme because $KH_*(\cV_R(\cF))$ and $K_*(\cV_R(\cF))$ are the homotopy groups of the homotopy limits of the corresponding \v Cech cosimplicial spectra (\cite[Section 8]{Tderived}, \cite{Wkhomotopic}).

\medskip\noindent\emph{Adams weights.} Adams operations in algebraic $K$-theory with expected properties have been defined with various levels of generality by several people. This includes operations in the affine and regular cases, or with rational coefficients. Since these works usually consider Quillen's theory and, simultaneously, utilize the Mayer-Vietoris property, one usually requires the existence of an ample family of line bundles for the scheme in question (e.g., quasi-projectivity). It seems plausible that the operations can be defined for Thomason's theory in the appropriate generality.

For a commutative ring $R$, Adams operations $\psi^k:K_*(R)\to K_*(R)$, $k\not=0$, are ring homomorphisms. 
Let $R$ be a Noetherian ring of Krull dimension $d$ and $q\in\ZZ_+$. Denote by $K_q(R)^{(i)}\subset K_q(R)$ the subgroup of elements of \emph{Adams weight $i$}, i.e., the elements $x$  such that $\psi^k(x)=k^ix$ for all $k$. Then one has:
\begin{equation}\label{Soule}
\begin{aligned}
&K_q(R)\otimes\ZZ\left[1/(q+d-1)!\right]=\bigoplus_{i=2}^{q+d}\left(K_q(R)^{(i)}\otimes\ZZ\left[1/(q+d-1)!\right]\right)\quad\text{for}\ q\ge2,\\
&K_1(R)\otimes\ZZ\left[1/(d+1)!\right]=\bigoplus_{i=1}^{d+1}\left(K_1(R)^{(i)}\otimes\ZZ\left[1/(d+1)!\right]\right),\\
&\U(R)=\U(R)^{(1)},\ \text({i.e.,}\ \psi^k(u)=u^k\ \text{for all}\ u\in\U(R)).
\end{aligned}
\end{equation}
The first two equalities are proved in \cite[\S2.8]{Soperations} and the third is proved in \cite[Cor. 6.8]{Kratzer}.

\medskip\noindent\emph{Notice.} The original formulation in \cite{Soperations} uses the \emph{stable range} of $R$, which is known to be bounded above by $d+1$ \cite[Ch. 5, Thm. 3.5]{Bass}.

\begin{lemma}\label{Adamsoperations}
Assume either $R$ contains a field or $\cF$ is simplicial.
\begin{itemize}
\item[(a)] Adams operations are defined for the whole spectral sequence  ${}'E_\QQ$, i.e., the $\psi^k$ also act on the $({'E}^{p,q}_r)_\QQ$ and the action commutes with the differentials.
\item[(b)] ${}'E$ is a module over $K_*(R)$, i.e., multiplication by elements from $K_*(R)$ commute with the differentials in ${}'E$:
$$
z\cdot d_r^{p,q}(-)=d^{p,q+t}_r(z\cdot-):{'E}^{p,q}_r\to{'E}^{p+r,q+r+t-1}_r,\qquad z\in K_t(R).
$$
\end{itemize}
\end{lemma}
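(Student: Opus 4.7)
The plan is to exploit that both the Adams operations and the $K_*(R)$-action are induced at the level of the \v{C}ech cosimplicial spectrum giving rise to ${}'E$. Since $\cF$ is quasi-projective, every non-empty intersection
\[
U_{\sigma_{i_0}}\cap\cdots\cap U_{\sigma_{i_p}}=\Spec\bigl(R[M(\sigma_{i_0}\cap\cdots\cap\sigma_{i_p})\times M']\bigr)
\]
(for some positive affine monoid $M'$) is affine and quasi-projective over $\Spec(R)$, hence has an ample family of line bundles, so Thomason's and Quillen's $K$-theory agree on it. In particular, ${}'E$ is the Bousfield--Kan spectral sequence of the corresponding cosimplicial $KH$-spectrum, whose coface and codegeneracy maps are flat pullbacks along these open immersions (each a monomial localization).

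For part (a), I would invoke the rational Adams operations of Soul\'e \cite{Soperations} and Gillet--Soul\'e on the $K$-theory of each affine piece. At the $E_1$-level the only $K$-theory appearing is that of the Laurent polynomial rings $R[M(\sigma)]$, on which $\psi^k$ is the classical extension of $\psi^k$ on $K_*(R)$ via the action $u\mapsto u^k$ on the units $M(\sigma)\subset\U(R[M(\sigma)])$ dictated by the third line of (\ref{Soule}). Naturality of $\psi^k$ under flat pullbacks then forces it to commute with $d_1$. Lifting $\psi^k$ to a self-map of the entire cosimplicial spectrum promotes this to a morphism of ${}'E_\QQ$ commuting with every $d_r$; equivalently, the Adams eigenspace decomposition of each $K_j(R)_\QQ$ supplied by (\ref{Soule}) pulls back to an eigenspace decomposition of each $({'E}_r^{p,q})_\QQ$ preserved by all differentials.

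For (b), the structure morphism $\cV_R(\cF)\to\Spec(R)$ restricts to $U_\sigma\to\Spec(R)$ for every $\sigma\in\cF$, making each $K_*(U_\sigma)$ a graded $K_*(R)$-module by pullback. All \v{C}ech face and degeneracy maps are $R$-algebra morphisms, hence $K_*(R)$-linear on $K$-theory. Thus $K_*(R)$ acts on the full cosimplicial spectrum, on its totalization, and on every page of ${}'E$, with each $d_r$ being $K_*(R)$-linear---precisely the displayed identity with the indicated bi-degree shift. The main obstacle is the assertion in (a) that $\psi^k$ exists as a compatible self-map of the underlying cosimplicial spectrum, not merely on homotopy groups of its terms; this requires spectrum-level rational Adams operations on the $K$-theory of quasi-projective $R$-schemes with an ample line-bundle family, as supplied by the Gillet--Soul\'e construction. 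No analogous subtlety arises in (b), where the $K_*(R)$-action is manifestly inherited at each stage.
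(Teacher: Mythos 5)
Your part (b) is essentially the paper's argument: the $K_*(R)$-module structure is inherited from the spectrum-level product in $K$-theory and propagated through the spectral sequence along the lines of Thomason's Prop.\ 8.3, and nothing more is needed there. Part (a), however, has a genuine gap, and the telltale sign is that your argument never uses the hypothesis ``$R$ contains a field or $\cF$ is simplicial.'' The operations you invoke (Soul\'e, Gillet--Soul\'e, and in the paper Lecomte) are constructed on algebraic $K$-theory of schemes with an ample family of line bundles --- not on homotopy $K$-theory. The spectral sequence ${}'E$ is built from the \v{C}ech cosimplicial diagram of $KH$-\emph{spectra} of the open pieces; the identification $KH_*(U_\sigma)=K_*(R[M(\sigma)])$ is an isomorphism of homotopy groups obtained from the grading on the monoid, and it does not by itself produce a self-map of the cosimplicial $KH$-spectrum. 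So your key step --- ``lifting $\psi^k$ to a self-map of the entire cosimplicial spectrum'' --- is exactly the assertion that needs proof, and the construction you cite does not supply it for $KH$.

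The paper's route is different: it first establishes the analogue of (a) for the $K$-theory spectral sequence ${}''E_\QQ$ (where Lecomte's space-level operations and naturality do apply, since the intersections are quasi-projective), and then uses Lemma \ref{Esplits} to exhibit ${}'E_\QQ$ as a direct summand (spectral subsequence) of ${}''E_\QQ$; the multiplicativity of $\psi^k$ and the third equality in (\ref{Soule}) show the operations preserve that summand. This is precisely where the hypothesis enters, since Lemma \ref{Esplits} requires it. If you want to avoid the detour through ${}''E$, the only known direct construction of operations on $KH$ is via cdh-descent (Haesemeyer) combined with Gillet--Soul\'e, and that works only over a field of characteristic $0$ --- strictly less general than the statement of the lemma. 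You should either adopt the paper's restriction argument or restrict your claim to the characteristic-$0$ case and cite the descent construction explicitly.
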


\begin{proof} (a) Adams' operations are defined for the rational $K$-theory of any quasi-projective scheme \cite{Lecomte}. In fact, the operations are defined on the level of the corresponding $K$-theoretical spaces and these operations are functorial w.r.t. morphisms of schemes \cite[Prop. 4.1.2]{Lecomte}. This implies that the operations are natural w.r.t. the homotopy fibers of the maps of $K$-theoretical spaces, induced by morphisms of the underlying schemes. In particular, Adams operations are natural w.r.t. the Mayer-Vietoris long exact sequences, associated to covers by two open subschemes. Then the naturality can be promoted to the Mayer-Vietoris spectral sequence of an open cover along the lines of proof of \cite[Prop. 8.3]{Tderived}. That is, for an open cover of any quasi-projective scheme, Adams' operations are defined on the open sub-schemes and they commute with the differentials in the associated spectral sequence. (In the induction w.r.t. the number of open subschemes involved, following the outline in \cite[Prop. 8.3]{Tderived}, it is important that the operations are defined for quasi-projective -- not just affine -- schemes.)

In particular, the analogue of Lemma \ref{Adamsoperations}(a) holds for the spectral sequence ${}''E_\QQ$. By Lemma \ref{Esplits}, ${}'E_\QQ$ is a subsequence of the latter. (This is where the condition `$R$ contains contains a field or $\cF$ is simplicial' enters the argument.) So, using the multiplicative property of the maps $\psi^k$ and the fact that the $\psi^k$ respect the groups of units (the 3rd equality in (\ref{Soule})), the operations on ${}''E_\QQ$ restrict to ${}'E_\QQ$.

(b) The product in $K$-theory of schemes is functorial w.r.t. the underlying $K$-theoretical spaces (\cite[p. 342]{WAprod}). So the approach used in the proof of \cite[Prop. 8.3]{Tderived} works here, too.
\end{proof}

\medskip\noindent\emph{Notice.} As the referee pointed out, in the special case when $R$ contains a field of characteristic 0, one can define Adams operations directly on the $KH$-spectral sequence without invoking the Adams operations on the $K$-spectral sequence. In fact, this applies to \emph{any} scheme $X$ over a field of characteristic 0, not just toric varieties. In more detail, Haesemeyer \cite{HAdesc} proved that $KH_m(X)$ is the cdh-hypercohomology $H^{-m}_{\text{cdh}}(X,K)$ and, therefore, the general approach, developed by Gillet-Soule \cite{GSfiltr}, shows that $KH_m(X)$ is a $\lambda$-module for $m\ge0$. Alternatively, one can use the approaches, developed by Cisinski \cite{CIdesc} and Riou \cite{RIa1}.

\subsection{Singular cohomology}\label{Singular}
In this subsection the ground ring is $\CC$. For any finitely generated graded $\CC$-algebra $B=A\oplus A_1\oplus A_2\oplus\cdots$ the one-parameter family of maps $B\to B$, $(a,a_1,a_2,\ldots)\to(a,ta_1,t^2a_2,\ldots)$, $t\in[0,1]$, shows that $\Spec(A)_{\text{top}}$ is a strong deformation retract of $\Spec(B)_{\text{top}}$. Here `$-_{\text{top}}$' refers to the underlying complex space. Any affine toric variety over $\CC$ is of the form $\Spec(\CC[M\times\ZZ^n])_{\text{top}}$ for an affine normal positive monoid $M$. Because there is a grading $\CC[M\times\ZZ^n]=\CC[\ZZ^n]\oplus A_1\oplus\cdots$, $\Spec(\CC[M\times\ZZ^n])_{\text{top}}$ is homotopic to the complex torus $(\CC^*)^n$ and, therefore, to the real torus $(S^1)^n$.

Let $\cF$ be a fan, $\max(\cF)=\{\sigma_1,\ldots,\sigma_n\}$, $\cV=\cV_{\text{top}}(\cF)$, and $M(\sigma)$ and $U_{\sigma}$ be as in Section \ref{Cohomological}.

Because $U_{\sigma_{i_0}\cap\cdots\cap\sigma_{i_p}}$ is homotopic to $(S^1)^r$ with $r=\rank M(\sigma_{i_0}\cap\cdots\cap\sigma_{i_p})$, we have $H^q(U_{\sigma_{i_0}\cap\cdots\cap\sigma_{i_p}},\ZZ)=\Lambda^qM(\sigma_{i_0}\cap\cdots\cap\sigma_{i_p})$. Let $\tilde E$ be the integral cohomology spectral sequence of the standard open cover $\cV=\bigcup U_{\sigma_i}$. Then the first page of $\tilde E$ is

\begin{equation}\label{singcoh}
\xymatrix{
\oplus_{i_0}\Lambda^2M(\sigma_{i_0})\ar[r]&\oplus_{i_0<i_1}\Lambda^2M(\sigma_{i_0}\cap\sigma_{i_1})\ar[r]&\oplus_{i_0<i_1<i_2}\Lambda^2M(\sigma_{i_0}\cap\sigma_{i_1}\cap\sigma_{i_2})
\ar[r]&\cdots\\
\oplus_{i_0}M(\sigma_{i_0})\ar[r]&\oplus_{i_0<i_1}M(\sigma_{i_0}\cap\sigma_{i_1})\ar[r]&\oplus_{i_0<i_1<i_2}M(\sigma_{i_0}\cap\sigma_{i_1}\cap\sigma_{i_2})\ar[r]&\cdots\\
\oplus_{i_0}\ZZ\ar[r]&\oplus_{i_0<i_1}\ZZ\ar[r]&\oplus_{i_0<i_1<i_2}\ZZ\ar[r]&\cdots\\
}
\end{equation}
where the differentials are $d_r:\tilde E_r^{p,q}\to\tilde E_r^{p+r,q-r+1}$, i.e.,

$$
\tilde E_1^{p,q}=\oplus_{i_0<\cdots<i_p}\Lambda^qM(\sigma_{i_0}\cap\cdots\cap\sigma_{i_p})\Longrightarrow H^{p+q}(\cV,\ZZ)
$$

\medskip\noindent\emph{Notice.} In practice, it is difficult to keep track of the many cones in $\cF$. To circumvent the difficulty, one uses another more economical spectral sequence -- the spectral sequence resulting from filtering $\cV$ by the unions of the closures of the torus orbits \cite[Chap. 12]{CLStoric}.

\medskip As usual, the action of a natural number $c$ on the spectral sequence $\tilde E$ will be denoted by $c_*$. We see that the rational cohomology $H^q(\cV,\QQ)$ is filtered
$$
0\subset V_0\subset\cdots\subset V_q=H^q(\cV,\QQ)
$$
in such a way that $c_*$ acts on $V_j/V_{j-1}$ by multiplication on $c^j$.

When $\cV$ is complete, the mentioned filtration coincides with the \emph{Deligne weight filtration}. More precisely, one has (\cite[Prop. 1.4]{ABWweights}\cite[\S6]{TTL}):
\begin{align*}
W_0H^q(\cV,\QQ)&\subset W_1H^q(\cV,\QQ)\subset\cdots\\
&\subset W_{2q-1}H^q(\cV,\QQ)\subset W_{2q}H^q(\cV,\QQ)=H^q(\cV,\QQ),
\end{align*}
where, as a special feature of toric varieties,
\begin{align*}
W_{2j}H^q(\cV,\QQ)=W_{2j+1}H^q(\cV,\QQ)=\{z\in H^q(\cV,\QQ)\ |\ c_*(Z)=c^iz\ &\text{for some}\ i\le j\},\\
&j=0,\ldots,q-1.
\end{align*}
(For noncompact varieties, the Deligne filtration is defined for the rational cohomology \emph{with compact support} \cite{Deligne}, but we do not need it here.)

Under the same completeness condition on $\cV$, Totaro \cite[Thm. 3]{TTL} has shown
\begin{equation}\label{operational}
\bigoplus_{a\ge0}\gr_{2a}^WH^{2a}(\cV,\QQ)=A^*(\cV)_\QQ,
\end{equation}
where $W$ refers to the $\NN$-weights with doubled indices (to make it compatible with Deligne's weights in the complete case) and $A^*$ is the Fulton-MacPherson operational Chow cohomology \cite{Fintersection}. In fact, \cite[Thm. 3]{TTL} establishes the dual rational isomorphism between the Chow groups and the smallest subspace of the Borel-Moore homology with respect to the weight filtration, the duality between $A_*$ and $A^*$ being provided by \cite[Thm. 3]{FMSSspheric}.

\subsection{Degeneration at the second page}
The following lemma extends the standard degeneration of $\tilde E_{\QQ}$ at the second page (e.g., \cite[Prop. 12.3.10]{CLStoric}) for the $H^*$-spectral sequence $\tilde E$, introduced in Section \ref{Singular}.

\begin{lemma}\label{singcohdeg}
For each $r\ge2$ there is an integer $n$ such that the differentials $d_r:\tilde E^{p,q}_r\to\tilde E^{p+r,q-r+1}_r$ satisfy $nd_r=0$ for all $p,q\ge0$. Hence the spectral sequence $\tilde E_{\QQ}$ degenerates at the second page. Moreover, the row $q=0$ satisfies $\tilde E^{p,0}_2=\tilde E^{p,0}_\infty$.
\end{lemma}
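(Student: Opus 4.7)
The plan is to replay the $\NN$-weight argument of Section \ref{Cohomological} in the topological setting. The key input is that the toric endomorphism induced by $m\mapsto cm$ preserves the standard cover $\{U_{\sigma_i}\}$ and hence descends to an endomorphism $c_*$ of the entire \v{C}ech bicomplex computing $\tilde E$. The first thing to verify is that $c_*$ acts on $\tilde E^{p,q}_1=\bigoplus\Lambda^q M(\sigma_{i_0}\cap\cdots\cap\sigma_{i_p})$ by multiplication by $c^q$; this uses the homotopy equivalence $U_I\simeq(S^1)^{\rank M(\sigma_I)}$ from Section \ref{Singular} and the fact that the $c$-th power map on a torus multiplies $H^q$ by $c^q$. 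Since $c_*$ is a chain map of the Čech bicomplex, it commutes with all $d_r$ and restricts to the same scalar action on each subquotient $\tilde E^{p,q}_r$.

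Next, the commutation $c_*d_r=d_rc_*$ applied to $x\in\tilde E^{p,q}_r$ yields $c^{q-r+1}d_rx=c^qd_rx$, i.e.\ $c^{q-r+1}(c^{r-1}-1)d_r=0$ for every $c\ge2$. Specializing to $c=2$ and $c=3$ and using $\gcd(2^{q-r+1},3^{q-r+1})=1$, a B\'ezout manipulation clears the $q$-dependent factor and yields a uniform annihilator $n=(2^{r-1}-1)(3^{r-1}-1)$ of $d_r$, independent of $p$ and $q$. This gives the first assertion and, after tensoring with $\QQ$, also the rational degeneration at the second page.

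For the integral equality $\tilde E^{p,0}_2=\tilde E^{p,0}_\infty$, the outgoing differentials from $\tilde E^{p,0}_r$ land in $\tilde E^{p+r,1-r}_r=0$ for first-quadrant reasons. For the incoming $d_r\colon\tilde E^{p-r,r-1}_r\to\tilde E^{p,0}_r$, the source has weight $c^{r-1}$ while the target has weight $1$, so the same commutation gives the clean relation $(c^{r-1}-1)d_r=0$ with no $c^{q-r+1}$ factor. It then suffices to show $\gcd_{c\ge2}(c^{r-1}-1)=1$: taking $c=\ell$ prime, one has $\ell^{r-1}-1\equiv-1\pmod{\ell}$, so no prime divides this gcd and $d_r$ must vanish integrally into the $q=0$ row.

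The main obstacle I anticipate is the very first step---identifying the $c_*$-action on the first-page groups as multiplication by $c^q$ through the cohomology identification coming from the torus homotopy equivalence, and confirming that this action honestly descends through every page as a scalar. Once this weight computation is in place, the remaining manipulations are routine gcd and B\'ezout arguments.
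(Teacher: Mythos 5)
Your proof is correct and follows essentially the same route as the paper: the weight relation $c^{q-r+1}(c^{r-1}-1)d_r=0$ obtained by commuting $c_*$ with $d_r$, followed by a gcd/B\'ezout argument with $c=2,3$ giving the annihilator $(2^{r-1}-1)(3^{r-1}-1)$. You even spell out the $q=0$ row integrally (via $\gcd_{c\ge2}(c^{r-1}-1)=1$), which the paper's proof leaves implicit, and the weight computation you flag as the main worry is exactly the standard fact, already set up in the paper's Section on singular cohomology, that the $c$-th power map acts by $c^q$ on $H^q$ of a torus.
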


\begin{proof}
We write $d=d_r$. If $q<r-1$ and $z\in\tilde E^{p,q}_r$ then $dz=0$. Assume $q\ge r-1$. Then for all $c\in\NN$ we have
$$
c^{q-r+1}dz=c_*(dz)=d(c_*(z))=d(c^qz)=c^qdz,\quad\text{or}\quad c^{q-r+1}(c^{r-1}-1)dz=0.
$$
So we can choose $n=\gcd(c^{q-r+1}(c^{r-1}-1)\ |\ c\ge2,\ q\ge r-1)$.

Taking $c=2$ shows that the odd part of $n$ divides $2^{r-1}-1$, and taking $c=3$ shows that the 2-primary part of $n$ divides $3^{r-1}-1$. So $n\le(2^{r-1}-1)(3^{r-1}-1)$.
\end{proof}

The next proposition is Totaro's degeneration result for the $KH$-spectral sequence ${}'E$ (introduced in Section \ref{Cohomological}), after tensoring with $\QQ$. The argument is a variation of the degeneration of $\tilde E_{\QQ}$ at the second page and it uses the two splittings, provided by the $\NN$- and Adams weights.

\begin{theorem}\label{totarodegeneration}
Let $R$ be a regular ring and $\cF$ a quasi-projective fan. Assume that either $R$ contains a field or $\cF$ is simplicial. Then ${}'E_{\QQ}$ degenerates at the second page.
\end{theorem}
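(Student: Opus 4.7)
The plan is to combine the $\NN$-weight decomposition of ${}'E_\QQ$ with the Adams-weight decomposition of $K_*(R)_\QQ$ and the $K_*(R)_\QQ$-module structure from Lemma \ref{Adamsoperations}(b), so that every higher differential becomes a map whose target has a forbidden Adams weight and must vanish.

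First I would set up the $\NN$-weight splitting. Rationally, $c_*$ has distinct eigenvalues $1,c,c^2,\ldots$, yielding a decomposition ${}'E_\QQ=\bigoplus_a ({}'E_\QQ)^{(a)}$ preserved by every differential. Writing $C^p_a=\bigoplus_{i_0<\cdots<i_p}\Lambda^a M(\sigma_{i_0}\cap\cdots\cap\sigma_{i_p})$, formula (\ref{Epq}) identifies $({}'E_{1,\QQ}^{p,q})^{(a)}=C^p_a\otimes K_{q-a}(R)_\QQ$, with $d_1$ acting as the \v Cech coboundary tensored with the identity on $K_*(R)_\QQ$. Taking row homology and using exactness of $-\otimes_\QQ K_*(R)_\QQ$ gives
$$
({}'E_{2,\QQ}^{p,q})^{(a)}=\tilde H^p_a\otimes K_{q-a}(R)_\QQ,\qquad \tilde H^p_a:=H^p(C^\bullet_a),
$$
which as a bigraded $K_*(R)_\QQ$-module is free on the basis $\tilde H^\bullet_a$ placed in $K$-theoretic degree $a$. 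Since $M(\sigma)$ consists of units, the third identity in (\ref{Soule}) makes $\psi^k$ act by $k$ on $M(\sigma)$ and so, by multiplicativity, by $k^a$ on $\Lambda^a M(\sigma)$ and on $\tilde H^p_a$; a generator $x\otimes 1\in\tilde H^p_a\otimes K_0(R)_\QQ$ thus has Adams weight exactly $k^a$.

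Then I would induct on $r\ge 2$: assuming $d_2=\cdots=d_{r-1}=0$ rationally, $({}'E_{r,\QQ})^{(a)}$ coincides with $({}'E_{2,\QQ})^{(a)}$ and remains a free $K_*(R)_\QQ$-module with generators in degree $a$. By Lemma \ref{Adamsoperations}(b), $d_r$ is $K_*(R)_\QQ$-linear, so it is determined by its restriction to generators, which is a map $\tilde H^p_a\to\tilde H^{p+r}_a\otimes K_{r-1}(R)_\QQ$. Commutation with the $\psi^k$ (Lemma \ref{Adamsoperations}(a)) forces the image to have Adams weight $k^a$ and therefore to lie in $\tilde H^{p+r}_a\otimes K_{r-1}(R)^{(0)}_\QQ$. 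But the first two identities of (\ref{Soule}) confine the Adams weights occurring in $K_n(R)_\QQ$ to $\{1,\dots,d+1\}$ when $n=1$ and to $\{2,\dots,n+d\}$ when $n\ge 2$, so $K_{r-1}(R)^{(0)}_\QQ=0$ for every $r\ge 2$. Hence $d_r$ vanishes on generators, and by $K_*(R)_\QQ$-linearity on all of $({}'E_{r,\QQ})^{(a)}$; summing over $a$ closes the induction. The main obstacle I anticipate is the bookkeeping in the setup step, namely verifying that the $\NN$-weight decomposition passes cleanly to higher pages as a direct sum of free $K_*(R)_\QQ$-modules, and that the Adams operations really act on exterior-power summands with weight $k^a$. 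Once these compatibilities are secured, degeneration reduces to the crisp identity $K_{r-1}(R)^{(0)}_\QQ=0$ supplied by Soul\'e's theorem.
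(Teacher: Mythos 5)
Your proposal is correct and follows essentially the same route as the paper's proof: reduce to generators via the $K_*(R)_\QQ$-module structure of Lemma \ref{Adamsoperations}(b), use the $\NN$-weight eigenspace decomposition to force the target of $d_r$ into $\tilde H^{p+r}_a\otimes K_{r-1}(R)_\QQ$, and then kill it by comparing Adams weights via Soul\'e's decomposition (\ref{Soule}). The only differences are organizational (your explicit induction on $r$, and phrasing the conclusion as $K_{r-1}(R)^{(0)}_\QQ=0$ rather than ``the target has Adams weights $>q$''), plus a harmless notational slip of calling the Adams weight $k^a$ instead of $a$.
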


\begin{proof}
Let $\sigma_i$, $U_{\sigma_i}$, $M(\sigma)$ be as in Section \ref{Cohomological}. We can assume $n\ge3$, for otherwise there is nothing to prove.

First, we observe that the differential $d_1$ is an integer linear combination of restriction maps $KH_*(U)\to KH_*(U')$ for $U'\subset U$. Also, under the restriction map the subgroup $\Lambda^kM(\sigma)$ of $KH_n(U)$ maps to the subgroup $\Lambda^kM(\sigma')$ of $KH_n(U')$. Consequently, if we let $A$ be the direct sum of the images of the subgroups $\Lambda^*M=\Lambda^*M\otimes\ZZ$ of $\Lambda^*M\otimes K_0(R)$, where the larger groups are those showing up in the first page of ${}'E$, then $d_1$ maps $A$ into itself.

By (\ref{Epq}), the first page of ${}'E$ is $A\otimes K_*(R)$ and the second page is $H(A,d_1)\otimes K_*(R)$. By Lemma \ref{Adamsoperations}(b), to show that ${}'E_\QQ$ degenerates at the second page, it suffices to show that all differentials starting at $H(A,d_1)_{\QQ}$ are $0$.

Fix $c\ge2$. The groups of $\NN$-weight a power of $c$, showing up in the first page of ${}'E_{\QQ}$, have the form $\Lambda^cM_{\QQ}\otimes K_{q-c}(R)$ for some lattice $M$. By (\ref{Soule}), for $q>0$ we have: (i) the group $K_q(R)_{\QQ}$ splits into subgroups of positive Adams weights, and (ii) the subgroup $M_{\QQ}\subset KH_1(U)_{\QQ}$ (for an appropriate $U$) has Adams weight 1. Since the maps $\psi^k$ are multiplicative, the group $\Lambda^cM\otimes K_{q-c}(R)$ for $c<q$ has Adams weights $>c$.

Consider a differential $d_r:\big(\text{subquotient of}\ \Lambda^qM_\QQ\big)\to({'E}_r^{p+r,q+r-1})_\QQ$. The $\NN$- and Adams weights of the source are both $q$. (For the Adams weights one uses the third equality of (\ref{Soule}) and the multiplicative property of the $\psi^k$.) But for $r\ge2$, the previous paragraph shows that the part of the target group of $\NN$-weight $q$ has the form $\Lambda^qM'_{\QQ}\otimes K_{r-1}(R)_{\QQ}$ for some lattice $M'$. By the same paragraph, the latter group splits into subgroups of Adams weight $>q$. By Lemma \ref{Adamsoperations}(a) the weights must be respected, and this implies $d_r=0$.
\end{proof}

\begin{corollary}\label{totaroequalities}
Assume $\cF$ is a projective fan, $\cV=\cV_{\text{top}}(\cF)$, and $R$ is a regular ring.
\begin{enumerate}
\item[(a)]
The map $K_*(\cV_R(\cF))\to KH_*(\cV_R(\cF))$ is split surjective.
\item[(b)] If $R$ contains a field then
$$
KH_n(\cV_R(\cF))_{\QQ}=\left[\bigoplus_{p,q\ge0}\gr^W_{2(p+n-q)}H^{2p+n-q}(\cV,\QQ)\right]\otimes K_q(R).
$$
\item[(c)]
If $\cF$ is simplicial then, additively,
$$
KH_*(\cV_R(\cF))_\QQ=K_*(R)^m_\QQ,\quad m=\#\max(\cF).
$$
\end{enumerate}
\end{corollary}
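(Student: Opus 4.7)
The plan is to derive parts (a), (b), (c) in turn, combining Theorem \ref{totarodegeneration}, Lemma \ref{singcohdeg}, and standard facts about the rational cohomology of toric varieties.

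Part (a) follows at once from (\ref{splitsurijective}), since a projective fan is in particular quasi-projective.

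For part (b), I would invoke Theorem \ref{totarodegeneration} (valid because $R$ contains a field) to collapse ${}'E_\QQ$ at the second page. The first-page description (\ref{Epq}) admits a canonical tensor decomposition
$$
{}'E_1^{p,q}=\bigoplus_{j=0}^{q}\tilde E_1^{p,q-j}\otimes K_j(R),
$$
in which the $d_1$-differentials are induced by restriction maps acting on the $M(\sigma)$ coordinates and trivially on $K_j(R)$. Hence $d_1$ splits according to $K_*(R)$-coefficients, and rationally
$$
{}'E_2^{p,q}=\bigoplus_{j=0}^{q}\tilde E_2^{p,q-j}\otimes K_j(R)_\QQ.
$$
Lemma \ref{singcohdeg} lets me replace $\tilde E_2$ by $\tilde E_\infty$, and since $\cF$ is projective the variety $\cV$ is complete, so by \cite[Prop. 1.4]{ABWweights}, \cite[\S6]{TTL} the $\tilde E$-filtration on $H^*(\cV,\QQ)$ coincides with the Deligne weight filtration, giving $\tilde E_\infty^{p,s}=\gr^W_{2s}H^{p+s}(\cV,\QQ)$. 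Summing over the Bousfield-Kan anti-diagonal $q=n+p$ and using the Adams-weight decomposition of Lemma \ref{Adamsoperations}(a) to split the filtration on $KH_n(\cV_R(\cF))_\QQ$ (rows of distinct weight separate exactly as in the proof of Theorem \ref{totarodegeneration}) reproduces the formula asserted in (b).

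For part (c), the hypothesis that $\cF$ is simplicial lets Theorem \ref{totarodegeneration} apply for any regular $R$, so the derivation of (b) goes through and produces the same formula. A projective simplicial toric variety is a rational homology manifold (an orbifold), so $H^*(\cV,\QQ)$ is pure of Hodge-Tate type: $H^{s}(\cV,\QQ)=0$ for odd $s$ and $\gr^W_{2k}H^{2k}(\cV,\QQ)=H^{2k}(\cV,\QQ)$. The summand $\gr^W_{2(p+n-q)}H^{2p+n-q}$ is then non-zero only when $2(p+n-q)=2p+n-q$, i.e.\ $q=n$, and the double sum collapses to
$$
KH_n(\cV_R(\cF))_\QQ=\Big(\bigoplus_{p\ge 0}H^{2p}(\cV,\QQ)\Big)\otimes K_n(R)_\QQ.
$$
The torus-orbit stratification of any complete toric variety yields $\chi(\cV)=\#\max(\cF)=m$, so the bracketed sum has $\QQ$-dimension $m$, giving $KH_n(\cV_R(\cF))_\QQ\cong K_n(R)_\QQ^m$, and summing over $n$ finishes (c).

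The main obstacle, in my view, is splitting the filtration on $KH_n(\cV_R(\cF))_\QQ$ induced by ${}'E_\QQ$ once degeneration is known: the splitting hinges on the Adams operations separating distinct weight pieces, which requires care with how rows of the $E_\infty$-page carry Adams weights. Matching the $\tilde E$-filtration with Deligne's and establishing cohomological purity in the simplicial case are standard inputs from the cited literature and orbifold theory, so the remaining work is primarily index bookkeeping between the Bousfield-Kan and traditional spectral sequence conventions.
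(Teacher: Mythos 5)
Your proposal is correct and follows essentially the same route as the paper: (a) from the split surjectivity in (\ref{splitsurijective}), (b) by degenerating ${}'E_\QQ$ at $E_2$ via Theorem \ref{totarodegeneration} and matching its first page with $\tilde E_1\otimes K_*(R)$ so that the $E_\infty$-terms become the Deligne weight-graded pieces, and (c) by purity of $H^*(\cV,\QQ)$ plus a count showing $\sum_p b_{2p}(\cV)=m$. The only (harmless) deviation is in (c), where you derive purity from the orbifold/rational-homology-manifold property and get $\sum b_{2p}=\chi(\cV)=m$ from the torus-orbit stratification, while the paper cites Oda's theorem and the explicit Betti number formula of \cite[Thm.\ 12.3.12]{CLStoric}; these inputs are equivalent.
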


\begin{remark}\label{Quasiprojective}
(a) Corollary \ref{totaroequalities}(b) and (\ref{operational}) imply that, if $\cF$ is projective and $R$ contains a field then $KH_0(\cV_R(\cF))_\QQ$ surjects onto $A^*(\cV)_\QQ\otimes K_0(R)$.

(b) If $\cF$ is just quasi-projective and $R$ contains a field, then Corollary \ref{totaroequalities}(b) is still valid in the form:
$$
KH_n(\cV_R(\cF))_{\QQ}=\left[\bigoplus_{p,q\ge0}\gr^W_{2(p+n-q)}H^{2p+n-q}\right]\otimes K_q(R),
$$
where the groups in the direct sum are associated with just the $\NN$-weight filtration of $H^*(\cV(\cF),\QQ)$, without the Delign filtration interpretation.
\end{remark}

\begin{proof}[Proof of Corollary \ref{totaroequalities}]
(a) As mentioned after (\ref{splitsurijective}), the splitting result is given in \cite[Prop. 5.6]{CHWWtoric}.

By Theorem \ref{totarodegeneration}, $KH_n(\cV_R(\cF))_\QQ$ is the direct sum of the graded pieces which align in $(E^{p,q}_2)_\QQ$ along the shifted diagonal $q=p+n$. Since the homomorphisms in (\ref{toric Mayer-Vietoris}) are direct sums of the homomorphisms in (\ref{singcoh}) with a certain shift (and tensored with the $K_j(R)$), we only need to keep track of the graded pieces from $H^*(\cV,\QQ)$, picked up in the summation process. This is most conveniently done by grouping w.r.t to the tensor factors $K_q(R)$.

\medskip\noindent(b) By Oda's theorem \cite[Thm. 12.3.11]{CLStoric}, $H^{2p}(\cV,\QQ)$ is \emph{pure}, i.e., $H^{2p}(\cV,\QQ)=\gr^W_{2p}H^{2p}(\cV,\QQ)$. (As remarked above, the cohomology spectral sequence, used in \cite[Ch. 12]{CLStoric}, is different from $\tilde E$.) On the other hand, we have the following Betti number counting (\cite[Thm. 12.3.12]{CLStoric}):
$$
b_{2p}(\cV)=\dim H^{2p}(\cV,\QQ)=\sum_{i=p}^{\dim\cF}(-1)^{i-p}{i\choose p}\#\cF(d-i),\quad p\ge0,
$$
where $\cF(d-i)$ is the set of $(d-i)$-dimensional cones in $\cF$. Consequently,
$$
b_0(\cV)+b_2(\cV)+b_4(\cV)+\cdots=\#\cF(d)=m.
$$
So, by the equality in (a), we have $KH_*(\cV_R(\cF))_\QQ=K_*(R)_\QQ^m$.
\end{proof}

\noindent\emph{Notice.} Most likely, Corollary \ref{totaroequalities}(c) extends to all complete -- not necessarily projective -- simplicial fans $\cF$: the same argument goes through word-for-word if Adams operations commute with the corresponding Mayer-Vietoris differentials.

\section{Nil-groups}\label{Conjecture}

For an affine monoid $M$ the \emph{normalization} and \emph{seminormalization} of $M$ are defined as follows
\begin{align*}
&\n(M)=\{x\in\gp(M)\ |\ cx\in M\ \text{for some}\ c\in\NN\},\\
&\sn(M)=\{x\in\gp(M)\ |\ cx\in M\ \text{for all sufficiently large}\ c\in\NN\}.
\end{align*}
They are, correspondingly, the smallest seminormal and normal monoids in $\gp(M)$, containing $M$.

For a finitely generated cone $C$, we denote by $\FF(C)$ the set of nonzero (i.e., positive dimensional) faces of $C$ and by $\inte(C)$ the relative interior of $C$ in the ambient Euclidean space.

\begin{conjecture}\label{conjecture}
Let $n$ be a nonnegative integer, $R$ a regular ring with $\QQ\subset R$, $M$ a positive affine monoid, and $C=\RR_+M$. Denote $\tilde K_n(R[M])=K_nR[M])/K_n(R)$.
Then there exists an $\sn(M)$-grading
$$
\tilde K_n(R[M])=\bigoplus_{\sn(M)\setminus\{0\}}A_j,
$$
satisfying the conditions:
\begin{enumerate}
\item[(a)] For every face $F\in\FF(C)$ there exist $m_F\in\inte(F)\cap\sn(M)$ such  that
$$
\Supp(\tilde K_n(R[M]))\subset\sn(M)\setminus\bigcup_{\FF(C)}(m_F+F);
$$
\item[(b)] $\Supp(c_*(x))\subset c\Supp(x)$ for all $x\in\tilde K_n(R[M])$ and $c\in\NN$;
\item[(c)] There is an $R[M]$-module structure on $\tilde K_n(R[M])$ with $c_*(mx)=m^cc_*(x)$,
\item[(d)] As an $R[M]$-module, $\tilde K_n(R[M])$ is generated by $\bigoplus_JA_j$ for a finite subset $J\subset\sn(M)$. Moreover, when $R$ is a number field, $\tilde K_i(R[M])$ is a finitely generated $R[M]$-module.
\end{enumerate}
\end{conjecture}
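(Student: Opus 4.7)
The plan is to construct the $\sn(M)$-grading using a combination of Stienstra--Weibel Witt-vector operations on the nil-$K$-theory and the translation action of $R[M]$, and then derive (a)--(d) from this grading combined with the nilpotence theorem (\ref{c-nilpotence}) and the cdh-descent results of \cite{CHWWtoric, CHWWpos}. Since $M$ is positive, fix a grading $M=\{0\}\sqcup M_1\sqcup M_2\sqcup\cdots$ making $R[M]$ a positively graded $R$-algebra. By Weibel's graded extension \cite{Woperations} of Stienstra's operations \cite{Sop}, $\tilde K_n(R[M])$ carries a natural $W(R)$-action; when $\QQ\subset R$, $W(R)_\QQ$ splits into ghost components $\prod_{k\ge 1}R$, giving eigenspace decompositions on which $c_*$ acts by $c^k$ on the $k$-th piece. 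Refining this by the $\gp(M)$-translation action coming from multiplication by $\chi^m$ should yield a joint decomposition indexed by $\sn(M)\setminus\{0\}$: the restriction from $\gp(M)$-weights to $\sn(M)$-weights is forced because any weight $j$ supporting a nonzero class must satisfy $cj\in M$ for all sufficiently large $c$ (since the images of the $c_*$ all lie in $R[M]$), which is precisely the defining condition of $\sn(M)$ inside $\gp(M)$.

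Conditions (b) and (c) are essentially built in from the construction. The $R[M]$-module structure of (c) is the translation structure, and the scaling $c_*(\chi^m x)=\chi^{cm}c_*(x)$ holds tautologically at the monoid-ring level since $c_*$ sends $\chi^m$ to $\chi^{cm}$. Condition (b) follows because $c_*$, acting by multiplication in $M$, sends the support of a $j$-homogeneous element to $\{cj\}$, matching the weight shift. For condition (a), the idea is to use, for each face $F\in\FF(C)$, the principal open toric localization $R[M]\to R[M+\ZZ F]$ that inverts a chosen element of $\inte(F)\cap M$: the monoid $M+\ZZ F$ has maximal subgroup containing $\ZZ F$, and the Fundamental Theorem (\ref{fundamental}) collapses the $\ZZ F$-directions of nil-$K$-theory onto nil-contributions for a quotient monoid of strictly smaller dimension. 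An induction on $\dim C$, with base case the full face $F=C$ (where the localization is a Laurent polynomial ring whose nil-$K$-theory vanishes by (\ref{fundamental})), should bound the $F$-supports of classes away from deep translates; the specific $m_F$ can then be extracted from a Castelnuovo--Mumford-style regularity bound along a cdh-resolution of $\Spec(R[M])$ around the orbit closure of $F$.

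The first half of (d) should be a direct Noetherian consequence of (a): the complement $\sn(M)\setminus\bigcup_F(m_F+F)$ lies in a finitely generated $M$-submodule of $\sn(M)$, so finitely many $A_j$ suffice to generate $\tilde K_n(R[M])$ as an $R[M]$-module. The main obstacle, in my view, is the finer assertion that $\tilde K_n(R[M])$ is a finitely generated $R[M]$-module when $R$ is a number field. This demands finite generation of each individual $A_j$ as a $K_n(R)$-module, where one would invoke the Quillen--Borel finite generation of $K_n$ of rings of integers, combined with a descent argument through the cdh-square of the normalization $R[M]\to R[\n(M)]$ and the resolution of toric singularities. Controlling the resulting spectral sequence, in particular excluding infinite towers of higher nil-contributions accumulating across the grading, appears to be the crucial technical difficulty.
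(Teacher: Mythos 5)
The statement you are addressing is a \emph{conjecture}: the paper does not prove it in general, and only verifies the case $n=0$, where everything reduces to the concrete computation $\tilde K_0(R[M])=\Pic(R[M])/\Pic(R)=R[\sn(M)]/R[M]$ (using the $SK_0$-invariance from (\ref{K0}) and \cite[Thm. 8.42]{kripo}), with (a) coming from the elementary fact that $l+\n(L)\subset L$ for some $l\in\inte(\RR_+L)\cap L$ (\cite[Prop. 2.33]{kripo}). Your proposal instead outlines an attack on the full statement for all $n$, and as written it is not a proof but a research plan whose key steps are exactly the open problems. The central gap is the very first one: the Stienstra--Weibel $\W(R)$-action gives, after fixing a grading of $M$ and tensoring with $\QQ$, a decomposition indexed by the chosen $\ZZ_+$-grading of $R[M]$ (the ghost components), \emph{not} an $\sn(M)$-grading, and there is no known ``translation action'' of $\gp(M)$ on $\tilde K_n(R[M])$ that would refine it. The $R[M]$-module structure you invoke to produce the joint eigenspace decomposition is itself part (c) of the conjecture; you cannot use it as an input. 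The paper flags precisely this missing ingredient when it says the first step toward the conjecture would have to be a \emph{multivariate} version of \cite{Sop} -- that multivariate theory does not yet exist, and your sketch assumes it does.

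The later steps have the same character. For (a), ``a Castelnuovo--Mumford-style regularity bound along a cdh-resolution'' is not an argument; the induction on $\dim C$ via localization at interior elements of faces is a plausible strategy, but every step (that the localized nil-groups control the part of the support deep inside $m_F+F$, and that the bounds can be made uniform over all faces) is unproven. For (d), you yourself identify finite generation over number fields as ``the crucial technical difficulty''; this is wide open, and nothing like Quillen--Borel finiteness is known to descend through the singularities of $\Spec(R[M])$. So your text should be read as a heuristic for why the conjecture might be true, not as a proof. The only part of the statement the paper actually establishes is the case $n=0$, and your outline does not recover even that case by the paper's concrete route through $\Pic(R[M])/\Pic(R)$ and the Frobenius homothety on $R[\sn(M)]/R[M]$.
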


The conjecture is true for $i=0$. In fact, by \cite[Thm. 8.42]{kripo},
$$
\Pic(R[M])/\Pic(R)=R[\sn(M)]/R[M]=R(\sn(M)\setminus M),
$$
where the rightmost entry is the free $R$ module on $\sn(M)\setminus M$. Actually, this is the monoid-ring case of \cite{Roberts}: the latter proves the same equality for Picard groups of graded rings. However, the `monoid friendly' argument, presented in \cite{kripo}, makes clear that the $c_*$-action on $\Pic(R[M])/\Pic(R)$ is exactly the Frobenius $c$-homothety on the $R$-module quotient $R[\sn(M)]/R[M]$. Moreover, the second equality in (\ref{K0}) implies $\tilde K_0(R[M])=\Pic(R[M])/\Pic(R)$. Further, the $R$-module structure results from the Stienstra-Weibel $\W(R)$-action (upon fixing a grading on $R[M]$) and the fact that $R\subset\W(R)$ -- a consequence of the inclusion $\QQ\subset R$. This shows (b,c,d), and (a) follows from the fact that for any affine positive monoid $L$ there exists $l\in\inte(\RR_+L)\cap L$ such that $l+\n(L)\subset L$ \cite[Prop. 2.33]{kripo}.

\medskip\noindent\emph{Notice.} The first step in proving Conjecture \ref{conjecture} could be a multivariate version of \cite{Sop}. Observe, Conjecture \ref{conjecture}(a,b) implies the existence of a natural number $c(R,M,n)$, such that $c_*$ annihilates all of $\tilde K_n(R[M])$ as soon as $c\ge c(R,M,n)$. One should also remark that the elements $m_F$ in Conjecture \ref{conjecture}(a) depend on $n$, as illustrated by Geller-Reid-Weibel's $K$-theoretical computations for cusps $k[t^u,t^v]$, $\gcd(u,v)=1$ (\cite[Thm. 9.2]{Curves}).

\medskip\noindent\emph{Acknowledgement.} Thanks are due to Wojciech Gajda for showing me Totaro's notes in Bielefeld many years ago, Florence Lecomte for comments on Adams operations, and Chuck Weibel and the referee for still more comments and spotting inaccuracies.

\bibliography{references}
\bibliographystyle{plain}

\end{document}